\providecommand{\U}[1]{\protect\rule{.1in}{.1in}}
\newtheorem{theorem}{Theorem}
\theoremstyle{plain}
\newtheorem{corollary}{Corollary}
\newtheorem{example}{Example}
\newtheorem{lemma}{Lemma}
\newtheorem{remark}{Remark}
\numberwithin{equation}{section}
\begin{document}
\title[ ]{Quantitative estimates of convergence in nonlinear operator extensions of
Korovkin's theorems}
\author{Sorin G. Gal}
\address{Department of Mathematics and Computer Science,\\
University of Oradea, Romania and Academy of Romanian Scientists, Bucharest, Romania}
\email{galso@uoradea.ro, galsorin23@gmail.com}
\author{Constantin P. Niculescu}
\address{Department of Mathematics, University of Craiova, Romania}
\email{constantin.p.niculescu@gmail.com}
\date{October 19, 2022}
\subjclass[2000]{41A35, 41A36, 41A63}
\keywords{Korovkin type theorems, monotone operator, sublinear operator, weakly
nonlinear operator, modulus of continuity, quantitative estimates}

\begin{abstract}
This paper is aimed to prove a quantitative estimate (in terms of the modulus
of continuity) for the convergence in the nonlinear version of Korovkin's
theorem for sequences of weakly nonlinear and monotone operators defined on
spaces of continuous real functions. Several examples illustrating the theory
are included.

\end{abstract}
\maketitle

\section{Introduction}

In its original form, Korovkin's theorem \cite{Ko1953}, \cite{Ko1960} provides
a very simple test of convergence to the identity for any sequence
$(L_{n})_{n}$ of positive linear operators that map $C\left(  [0,1]\right)  $
into itself: the occurrence of this convergence for the functions $1,~x$ and
$x^{2}$. In other words, the fact that%
\[
\lim_{n\rightarrow\infty}L_{n}(f)=f\text{\quad uniformly on }[0,1]
\]
for every $f\in C\left(  [0,1]\right)  $ reduces to the status of the three
aforementioned functions. Due to its simplicity and usefulness, this result
has attracted a great deal of attention leading to numerous generalizations.
Part of them are included in the authoritative monograph of Altomare and
Campiti \cite{AC1994} and the excellent survey of Altomare \cite{Alt2010}.

In a series of papers published since 2020, we have extended Korovkin's
theorem to framework of operators weakly nonlinear and monotone operators
acting on function spaces. See \cite{Gal-Nic-Med}, \cite{Gal-Nic-RACSAM},
\cite{Gal-Nic-Med2023} and \cite{Gal-Nic-subm}. For reader's convenience, we
recall here that an operator $T:E\rightarrow F$ is called \emph{weakly
nonlinear} if it satisfies the following two conditions:

\begin{enumerate}
\item[(SL)] (\emph{Sublinearity}) $T$ is subadditive and positively
homogeneous, that is,%
\[
T(f+g)\leq T(f)+T(g)\quad\text{and}\quad T(\alpha f)=\alpha T(f)
\]
for all $f,g$ in $E$ and $\alpha\geq0;$

\item[(TR)] (\emph{Translatability}) $T(f+\alpha\cdot1)=T(f)+\alpha T(1)$ for
all functions $f\in E$ and all numbers $\alpha\geq0.$
\end{enumerate}

In the case where $T$ is \emph{unital} (that is, $T(1)=1)$ the condition of
translatability takes the form%
\[
T(f+\alpha\cdot1)=T(f)+\alpha1,
\]
for all $f\in E$ and $\alpha\geq0.$

A stronger condition than translatability is

\begin{enumerate}
\item[(TR$^{\ast}$)] (\emph{Strong translatability}) $T(f+\alpha
\cdot1)=T(f)+\alpha T(1)$ for all functions $f\in E$ and all numbers
$\alpha\in\mathbb{R}.$
\end{enumerate}

The last condition occurs naturally in the context of Choquet's integral,
being a consequence of what is called there the property of \emph{comonotonic
additivity}, that is,

\begin{enumerate}
\item[(CA)] $T(f+g)=T(f)+T(g)$ whenever the functions $f,g\in E$ are
comonotone in the sense that%
\[
(f(s)-f(t))\cdot(g(s)-g(t))\geq0\text{\quad for all }s,t\in X.
\]
See \cite{Gal-Nic-Aeq} and \cite{Gal-Nic-JMAA}, as well as the references therein.
\end{enumerate}

In this paper we are especially interested in those weakly nonlinear operators
which preserve the ordering, that is, which verify the following condition:

\begin{enumerate}
\item[(M)] (\emph{Monotonicity}) $f\leq g$ in $E$ implies $T(f)\leq T(g)$ for
all $f,g$ in $E.$
\end{enumerate}

Examples of weakly nonlinear and monotone operators based on Choquet's theory
of integration can be found in \cite{Gal-Nic-Aeq}, \cite{Gal-Nic-JMAA} and
\cite{Gal-Nic-RACSAM}. Ergodic theory and harmonic analysis offer numerous
other examples of nonlinear operators which are sublinear, monotone and
strongly translatable. Here is an example.

Suppose that $E$ is a vector lattice of functions that contains the unity and
$U:E\rightarrow E$ is a sublinear, (strongly) translatable, monotone and
unital operator. Then each of the Yosida-Kakutani operators
\[
YK_{n}:E\rightarrow E,\quad YK_{n}(f)=\sup\left\{  f,\frac{1}{2}\left(
f+Uf\right)  ,...,\frac{1}{n}\sum_{k=0}^{n-1}U^{k}f\right\}  \text{\quad}%
(n\in\mathbb{N})
\]
verifies the same string of properties. In the case where $E=L^{p}(\mu)$ (for
$\mu$ a probability measure and $1\leq p<\infty$) and $U$ verifies in addition
the condition%
\[
\int\left\vert U(f)\right\vert ^{p}d\mu\leq C\int\left\vert f\right\vert
^{p}d\mu\text{\quad for all }f,
\]
then the operators $YK_{n}$ appear as truncations of the operator%
\[
YK:E\rightarrow E,\quad YK(f)=\sup\left\{  f,\frac{1}{2}\left(  f+Uf\right)
,...,\frac{1}{n}\sum_{k=0}^{n-1}U^{k}f,...\right\}  ,
\]
which is also sublinear, (strongly) translatable, monotone and unital. The
operators $YK_{n}$ and $YK$ came from the maximal ergodic theorem of Yosida
and Kakutani \cite{YK}.

Following an idea due to Popa \cite{Popa2022}, we have proved in
\cite{Gal-Nic-subm} the following Korovkin type theorem for sequences of
weakly nonlinear and monotone operators converging to an operator possibly
different from the identity:

\begin{theorem}
\label{thm1}Suppose that $K$ is a compact subset of the Euclidean orthant
$\mathbb{R}_{+}^{N}$, $X$ is a compact Hausdorff space and $T_{n}$
$(n\in\mathbb{N})$ and $A$ are weakly nonlinear and monotone operators from
$E=C(K)$ into $C(X)$ such that
\begin{equation}
A(1)\text{ is a strictly positive element} \label{hyp>0}%
\end{equation}
and
\begin{equation}
A(1)A(\sum_{k=1}^{N}\left(  \operatorname*{pr}\nolimits_{k}\right)  ^{2}%
)=\sum_{k=1}^{N}(A(-\operatorname*{pr}\nolimits_{k}))^{2}. \label{hypA}%
\end{equation}

Then%
\[
\lim_{n\rightarrow\infty}\left\Vert T_{n}(f)-A(f)\right\Vert =0\text{\quad for
all }f\in C(K)
\]
if and only if this property of convergence occurs for each of the functions
\begin{equation}
1,-\operatorname*{pr}\nolimits_{1},...,-\operatorname*{pr}\nolimits_{N}\text{
and }\sum\nolimits_{k=1}^{N}\operatorname*{pr}\nolimits_{k}^{2}.
\label{testset}%
\end{equation}

\end{theorem}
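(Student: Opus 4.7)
The plan is to adapt the classical Korovkin scheme to the weakly nonlinear setting, replacing linear bookkeeping by subadditive inequalities and exploiting hypothesis~(\ref{hypA}) to force a crucial algebraic cancellation. The forward direction is immediate, so the work is in the converse.

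The first step is to introduce, for each $t\in X$, the auxiliary ``centre''
\[
z(t)=\Bigl(-\tfrac{A(-\operatorname*{pr}\nolimits_{1})(t)}{A(1)(t)},\ldots,-\tfrac{A(-\operatorname*{pr}\nolimits_{N})(t)}{A(1)(t)}\Bigr),
\]
which is well defined and continuous on $X$ thanks to (\ref{hyp>0}). Monotonicity of $A$ applied to $-\operatorname*{pr}\nolimits_{k}\le 0$ yields $z_{k}(t)\ge 0$, and subadditivity together with (TR) gives $z_{k}(t)\le\max_{y\in K}y_{k}$; hence each $z(t)$ lies in a box containing $K$, so $f$ may be continuously extended to that box with comparable modulus of continuity. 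Replacing $f$ by $f+\|f\|_{\infty}$ and using (TR), I reduce to the case $f\ge 0$, which lets me apply positive homogeneity to the constant $f(z(t))$.

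For fixed $\delta>0$, the standard modulus-of-continuity majorisation gives, for every $y\in K$,
\[
f(y)\le f(z(t))+\omega(f;\delta)+\tfrac{\omega(f;\delta)}{\delta^{2}}\Bigl(\sum_{k=1}^{N}\operatorname*{pr}\nolimits_{k}^{2}(y)+2\sum_{k=1}^{N}z_{k}(t)\,(-\operatorname*{pr}\nolimits_{k})(y)+|z(t)|^{2}\Bigr),
\]
with all coefficients nonnegative. Monotonicity, subadditivity, positive homogeneity and (TR) of $T_{n}$ then transform this into an upper bound for $T_{n}(f)(t)$ as a nonnegative combination of $T_{n}(1)(t)$, $T_{n}(\sum_{k}\operatorname*{pr}\nolimits_{k}^{2})(t)$ and the $T_{n}(-\operatorname*{pr}\nolimits_{k})(t)$. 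A matching lower bound follows from the dual inequality $f(z(t))\le f(y)+G(y,t)$ combined with the subadditive consequence $T_{n}(f)\ge T_{n}(f+G)-T_{n}(G)$.

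Passing to $n\to\infty$, the hypothesis on the test set replaces every $T_{n}$-value by its $A$-counterpart uniformly in $t$; inserting $A(-\operatorname*{pr}\nolimits_{k})=-z_{k}A(1)$ and rewriting (\ref{hypA}) as $A(\sum_{k}\operatorname*{pr}\nolimits_{k}^{2})=A(1)\,|z|^{2}$, the $\omega/\delta^{2}$-terms cancel identically, leaving $\limsup_{n}\|T_{n}(f)-f(z(\cdot))A(1)\|_{\infty}\le\omega(f;\delta)\|A(1)\|_{\infty}$. Running the same chain with $A$ in place of every $T_{n}$ yields $\|A(f)-f(z(\cdot))A(1)\|_{\infty}\le\omega(f;\delta)\|A(1)\|_{\infty}$, and the triangle inequality followed by $\delta\to 0$ closes the proof; along the way one also extracts a quantitative estimate in terms of $\omega(f;\cdot)$ and the rates of convergence on the test set. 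The main technical nuisance will be the nonlinearity bookkeeping—(TR) applies only to nonnegative scalars (forcing the reduction to $f\ge 0$), subadditivity gives inequalities rather than identities (forcing separate upper and lower arguments), and positive homogeneity forces the coefficients of $-\operatorname*{pr}\nolimits_{k}$ to be nonnegative, which is precisely why the test set features $-\operatorname*{pr}\nolimits_{k}$ rather than $\operatorname*{pr}\nolimits_{k}$. The decisive step is the cancellation engineered by~(\ref{hypA}), playing the role that the identity $L((\cdot-x)^{2})(x)\to 0$ plays in the classical positive linear Korovkin theorem.
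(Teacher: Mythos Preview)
Your argument is correct. It differs, however, from the route taken in the paper. The paper derives Theorem~\ref{thm1} from its quantitative Theorem~\ref{thm2}, whose proof uses the Shisha--Mond/Popa double-application trick: one first applies $T_{n}$ in the $y$-variable to the inequality $|f-f(y)\cdot 1|\le(1+\delta^{-2}\|\cdot-y\|^{2})\omega(f,\delta)$ via Lemma~\ref{thmKrein}, then freezes the resulting $x\in X$ and treats the obtained inequality as a function of $y\in K$, to which $A$ is applied a second time (again via Lemma~\ref{thmKrein}). This yields directly $\|T_{n}(f)A(1)-T_{n}(1)A(f)\|\le(\|T_{n}(1)A(1)\|+1)\omega(f,\mu_{n})$, with $\mu_{n}$ the explicit quantity of Theorem~\ref{thm2}; Remark~\ref{rem1} then shows that~(\ref{hypA}) forces $\mu_{n}\to 0$ when the test functions converge. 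Your proof instead introduces the explicit centre $z(t)=-A(-\operatorname{pr})(t)/A(1)(t)$, extends $f$ off $K$ to a box, and compares both $T_{n}(f)$ and $A(f)$ to the common intermediary $\tilde f(z(\cdot))A(1)$. The paper's device avoids any extension of $f$ and packages the estimate in the single operator-theoretic quantity $\mu_{n}$; your approach, at the mild cost of the extension step, is more geometric and has the bonus of revealing that under~(\ref{hypA}) one actually has $A(f)=f(z(\cdot))A(1)$ (letting $\delta\to 0$ in your $A$-inequality), so $A$ is forced to be a weighted composition operator and, a posteriori, $z(t)\in K$.
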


Here $\operatorname*{pr}\nolimits_{k}$ denotes the canonical projection on the
$k$th coordinate.

A more general result that works for all compact subsets of the Euclidean
space $\mathbb{R}^{N}$ is available in \cite{Gal-Nic-subm}, Theorem 3.

In a very influential paper, Shisha and Mond \cite{SM} have put the classical
theorem of Korovkin in a quantitative form, expressing the rate of convergence
of $L_{n}(f)$ to $f$, in terms of the rates of convergence of $L_{n}(l)$ to
$1$, $L_{n}(x)$ to $x$, and $L_{n}(x^{2})$ to $x^{2}$.\ The aim of the present
paper is to prove a similar result covering the nonlinear framework of Theorem
1. See Theorem \ref{thm2}, Section 3,\ which represents the quantitative form
of Theorem \ref{thm1}. As was mentioned in \cite{Gal-Nic-subm}, Theorem
\ref{thm1} admits a number of trigonometric variants working for the
continuous functions defined on the unit circle $S^{1}$ (or on the
$2$-dimensional sphere $S^{2},$ on the $2$-dimensional torus $S^{1}\times
S^{1}$ etc.) Their quantitative forms need slight modifications of the
argument of Theorem 2 below and we leave the details as an exercise.

Applications are presented in Section 4. For the convenience of the reader, we
summarized in Section 2 some facts concerning the norm of a continuous and
sublinear operator.

\section{Background on weakly nonlinear operators}

Suppose that $E$ and $F$ are two ordered Banach spaces and $T$ $:E\rightarrow
F$ is an operator (not necessarily linear or continuous).

If $T$ is positively homogeneous, then
\[
T(0)=0.
\]
As a consequence,
\[
-T(-f)\leq T(f)\text{\quad for all }f\in E
\]
and every positively homogeneous and monotone operator $T$ maps positive
elements into positive elements, that is,%
\begin{equation}
Tf\geq0\text{\quad for all }f\geq0. \label{pos-op}%
\end{equation}
Therefore, for linear operators, the property (\ref{pos-op}) is equivalent to monotonicity.

The \emph{norm} of a continuous sublinear operator $T:E\rightarrow F$ can be
defined via the formulas%
\begin{align*}
\left\Vert T\right\Vert  &  =\inf\left\{  \lambda>0:\left\Vert T\left(
f\right)  \right\Vert \leq\lambda\left\Vert f\right\Vert \text{ for all }f\in
E\right\} \\
&  =\sup\left\{  \left\Vert T(f)\right\Vert :f\in E,\text{ }\left\Vert
f\right\Vert \leq1\right\}  .
\end{align*}
A sublinear operator may be discontinuous, but when it is continuous, it is
Lipschitz continuous. More precisely, if $T:E\rightarrow F$ is a continuous
sublinear operator, then
\[
\left\Vert T\left(  f\right)  -T(g)\right\Vert \leq2\left\Vert T\right\Vert
\left\Vert f-g\right\Vert \text{\quad for all }f\in E.
\]

Remarkably, all sublinear and monotone operators are Lipschitz continuous:

\begin{lemma}
\label{thmKrein}Every sublinear and monotone operator $T$ $:E\rightarrow F$
verifies the inequality
\[
\left\vert T(f)-T(g)\right\vert \leq T\left(  \left\vert f-g\right\vert
\right)  \text{\quad for all }f,g\in E
\]
and thus it is Lipschitz continuous with Lipschitz constant equals to
$\left\Vert T\right\Vert ,$ that is,
\[
\left\Vert T(f)-T(g)\right\Vert \leq\left\Vert T\right\Vert \left\Vert
f-g\right\Vert \text{\quad for all }f,g\in E.
\]

\end{lemma}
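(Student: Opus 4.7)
The plan is to derive the order inequality $|T(f)-T(g)|\leq T(|f-g|)$ directly from the two defining properties of a sublinear monotone operator, and then pass from the order inequality to the norm inequality via the Banach-lattice structure implicit in the statement (since the symbol $|\,\cdot\,|$ is used on the right-hand side, $F$ must be a vector lattice, and the monotonicity of the norm on the positive cone together with $\||h|\|=\|h\|$ will do the rest).

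First I would start from the pointwise order relation $f\leq g+|f-g|$, which holds in any vector lattice. Applying monotonicity gives $T(f)\leq T(g+|f-g|)$, and subadditivity gives $T(g+|f-g|)\leq T(g)+T(|f-g|)$. Chaining these yields $T(f)-T(g)\leq T(|f-g|)$. Swapping the roles of $f$ and $g$ produces the reverse inequality $T(g)-T(f)\leq T(|f-g|)$, and combining the two is precisely the claimed estimate
\[
-T(|f-g|)\leq T(f)-T(g)\leq T(|f-g|),
\]
which in the Banach-lattice $F$ is equivalent to $|T(f)-T(g)|\leq T(|f-g|)$.

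Second, I would pass to norms. Since the norm on $F$ is monotone on the positive cone and $\||h|\|=\|h\|$, the previous inequality implies
\[
\|T(f)-T(g)\|\leq \|T(|f-g|)\|\leq \|T\|\cdot\||f-g|\|=\|T\|\cdot\|f-g\|,
\]
where the middle step uses the definition of $\|T\|$ recalled immediately before the lemma. This shows $T$ is Lipschitz with constant at most $\|T\|$.

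Finally, to get the equality of the Lipschitz constant with $\|T\|$ (not merely an upper bound), I would take $g=0$ and invoke $T(0)=0$ (which follows from positive homogeneity, as noted earlier in the section): any Lipschitz constant $L$ of $T$ satisfies $\|T(f)\|=\|T(f)-T(0)\|\leq L\|f\|$ for every $f$, hence $\|T\|\leq L$ by the defining formula for $\|T\|$. I don't anticipate any real obstacle here; the only mild care needed is to make sure $F$ is assumed to be a vector lattice with monotone norm on its positive cone, which is tacit from the form of the statement and is satisfied in every application considered in the paper (where $F=C(X)$).
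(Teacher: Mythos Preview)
Your argument is correct and is exactly the standard one: use $f\leq g+|f-g|$, apply monotonicity then subadditivity, symmetrize, and pass to norms via $\||h|\|=\|h\|$ and monotonicity of the norm on the positive cone; the remark that $T(0)=0$ forces the Lipschitz constant to equal $\|T\|$ is also fine. Note, however, that the paper does not supply its own proof of this lemma---it merely writes ``See \cite{Gal-Nic-Med2023} for details''---so there is nothing to compare against beyond observing that your proof is the natural one and is presumably what the cited reference contains.
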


See \cite{Gal-Nic-Med2023} for details. Lemma \ref{thmKrein} is a
generalization of a classical result of M. G. Krein concerning the continuity
of positive linear functionals. See \cite{AA2001} for a historical account.

\section{The main results}

Before to state our main result we need to recall a basic inequality from
Shisha and Mond \cite{SM}, p. 1197, concerning the modulus of continuity,
\[
\omega(f,\delta)=\sup\{|f(x)-f(y)|:x,y\in K\text{, }\Vert x-y\Vert\leq
\delta\},\quad\delta>0,
\]
of a real-valued continuous functions $f$ defined on compact subset $K$ of
$\mathbb{R}^{N}.$ For convenience, we adopt the convention that $\omega
(f,0)=0.$

\begin{lemma}
\label{lem2}For all $x,y\in K$ and $\delta>0,$
\[
|f(x)-f(y)|\leq(1+\Vert x-y\Vert^{2}\delta^{-2})\omega(f,\delta).
\]

\begin{proof}
Indeed, if $\Vert x-y\Vert\leq\delta$, then
\[
|f(x)-f(y)|\leq\omega(f,\Vert x-y\Vert)\leq\omega(f,\delta)\leq(1+\Vert
x-y\Vert^{2}\delta^{-2})\omega(f,\delta),
\]
while if $\Vert x-y>\delta$, we have to take into account the following
well-known property of the modulus of continuity,
\[
\omega(f,\lambda\delta)\leq(1+\lambda)\omega(f,\delta)\quad\text{ for }%
\lambda,\delta>0,
\]
which yields
\begin{align*}
|f(x)-f(y)|  &  \leq\omega(f,\Vert x-y\Vert)=\omega\left(  f,\delta\cdot
\frac{\Vert x-y\Vert}{\delta}\right) \\
&  \leq(1+\delta^{-1}\Vert x-y\Vert)\omega(f,\delta)\leq(1+\Vert x-y\Vert
^{2}\delta^{-2})\omega(f,\delta).
\end{align*}
The proof is done.
\end{proof}
\end{lemma}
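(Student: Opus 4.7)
The plan is to prove the inequality by splitting into two cases, according to whether $\|x-y\|\leq\delta$ or $\|x-y\|>\delta$. In the first case the bound follows directly from the definition of the modulus of continuity: since $\|x-y\|\leq\delta$, we immediately have $|f(x)-f(y)|\leq\omega(f,\delta)$, and because the correction term $\|x-y\|^{2}\delta^{-2}$ is nonnegative, the claimed bound $(1+\|x-y\|^{2}\delta^{-2})\omega(f,\delta)$ follows trivially.

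For the second case $\|x-y\|>\delta$, I would invoke the classical scaling property $\omega(f,\lambda\delta)\leq(1+\lambda)\omega(f,\delta)$ valid for all $\lambda,\delta>0$ (which is a standard fact about moduli of continuity and can be quoted without proof, or justified by partitioning a segment of length $\lambda\delta$ into $\lceil\lambda\rceil$ pieces of length at most $\delta$ and telescoping). Taking $\lambda=\|x-y\|/\delta$ yields
\[
|f(x)-f(y)|\leq\omega(f,\|x-y\|)\leq\left(1+\frac{\|x-y\|}{\delta}\right)\omega(f,\delta).
\]
Since in this case $\|x-y\|/\delta>1$, we have $\|x-y\|/\delta\leq\|x-y\|^{2}/\delta^{2}$, and replacing the linear term by the quadratic one only weakens the inequality, producing the desired bound.

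There is no real obstacle: the argument is entirely a matter of case analysis together with the sub-additive scaling property of $\omega$. The only point worth noting is that the inequality as stated is a uniform expression that covers both cases simultaneously, which is what makes it a convenient tool in Korovkin-type quantitative estimates — when later paired with a monotone sublinear operator $T$, the factor $1+\|x-y\|^{2}\delta^{-2}$ can be absorbed by estimates on $T(1)$ and on the images of the quadratic test functions $\sum_{k}\operatorname{pr}_{k}^{2}$, exactly the functions appearing in the test set \eqref{testset} of Theorem \ref{thm1}.
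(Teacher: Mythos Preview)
Your argument is correct and follows essentially the same route as the paper's own proof: both split into the cases $\|x-y\|\leq\delta$ and $\|x-y\|>\delta$, handle the first case directly from the definition of $\omega$, and in the second case invoke the scaling property $\omega(f,\lambda\delta)\leq(1+\lambda)\omega(f,\delta)$ with $\lambda=\|x-y\|/\delta$, then use $\lambda>1$ to pass from the linear factor $1+\lambda$ to the quadratic one $1+\lambda^{2}$.
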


We can now state the main result of our paper:

\begin{theorem}
\label{thm2}Suppose that $K$ is a compact subset of the Euclidean orthant
$\mathbb{R}_{+}^{N}$, $X$ is a compact Hausdorff space and $T_{n}$
$(n\in\mathbb{N})$ and $A$ are weakly nonlinear and monotone operators from
$E=C(K)$ into $C(X)$ such that $A(1)$ is a strictly positive element. Put
\[
M=1/\inf_{x\in X}(A(1)(x))
\]
and
\[
\mu_{n}=\left\Vert T_{n}\left(  \sum_{k=1}^{N}\operatorname*{pr}%
\nolimits_{k}^{2}\right)  A(1)-2\sum_{k=1}^{N}A\left(  -\operatorname*{pr}%
\nolimits_{k}\right)  T_{n}\left(  -\operatorname*{pr}\nolimits_{k}\right)
+A\left(  \sum_{k=1}^{N}\operatorname*{pr}\nolimits_{k}^{2}\right)
T_{n}(1)\right\Vert ^{1/2}.
\]
Then
\begin{equation}
\Vert T_{n}(f)-A(f)\Vert\leq M\left\{  {\Vert T_{n}(1)-A(1)\Vert\cdot\Vert
A(f)\Vert+(\Vert T_{n}(1)A(1)\Vert+1)\omega(f,\mu_{n})}\right\}  , \label{eq0}%
\end{equation}
for all $f\in C(K)$ and all $n\in\mathbb{N}$.

In the particular case where $T_{n}(1)=A(1)=1$, the above estimate reduces to
\[
\Vert T_{n}(f)-A(f)\Vert\leq2\omega(f,\mu_{n}).
\]

\end{theorem}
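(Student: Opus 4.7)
The strategy is to follow the classical Shisha--Mond template, replacing linearity by the combined action of (M), (SL) and (TR). I start from Lemma \ref{lem2}: for each $z\in K$ and $\delta>0$ it rewrites into the inequality in $C(K)$
\[
\left\vert f-f(z)\mathbf{1}\right\vert \leq\omega(f,\delta)\mathbf{1}+\frac{\omega(f,\delta)}{\delta^{2}}g_{z},\qquad g_{z}(y)=\Vert y-z\Vert^{2}.
\]
Expanding $g_{z}=\sum_{k=1}^{N}\operatorname*{pr}\nolimits_{k}^{2}+2\sum_{k}z_{k}(-\operatorname*{pr}\nolimits_{k})+\bigl(\sum_{k}z_{k}^{2}\bigr)\mathbf{1}$, applying (M), (SL), (TR) to $T_{n}$, and using a shift trick (replace $f$ by $f+\Vert f\Vert_{\infty}\mathbf{1}$ so that the intervening scalar $f(z)+\Vert f\Vert_{\infty}$ is nonnegative and (TR) applies, then undo the shift) yields the pointwise bound on $X$
\[
\left\vert T_{n}(f)-f(z)T_{n}(\mathbf{1})\right\vert \leq\omega(f,\delta)T_{n}(\mathbf{1})+\frac{\omega(f,\delta)}{\delta^{2}}T_{n}(g_{z}),
\]
together with its analogue for $A$.

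The bridge to $T_{n}(f)-A(f)$ is the algebraic identity
\[
T_{n}(f)-A(f)=\frac{A(\mathbf{1})T_{n}(f)-T_{n}(\mathbf{1})A(f)}{A(\mathbf{1})}+A(f)\cdot\frac{T_{n}(\mathbf{1})-A(\mathbf{1})}{A(\mathbf{1})},
\]
from which $1/A(\mathbf{1})(x)\leq M$ immediately produces the term $M\Vert A(f)\Vert\cdot\Vert T_{n}(\mathbf{1})-A(\mathbf{1})\Vert$. The remaining ``commutator'' $A(\mathbf{1})T_{n}(f)-T_{n}(\mathbf{1})A(f)$ is rewritten as $A(\mathbf{1})[T_{n}(f)-f(z)T_{n}(\mathbf{1})]-T_{n}(\mathbf{1})[A(f)-f(z)A(\mathbf{1})]$; inserting the two inequalities from the first paragraph produces a bound controlled by $\omega(f,\delta)\cdot A(\mathbf{1})T_{n}(\mathbf{1})$ plus the ``second moment'' $A(\mathbf{1})T_{n}(g_{z})+T_{n}(\mathbf{1})A(g_{z})$.

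The decisive step is a pointwise choice of $z$: for each $x\in X$ set $z_{k}(x):=-A(-\operatorname*{pr}\nolimits_{k})(x)/A(\mathbf{1})(x)$, which is nonnegative because (M) forces $A(-\operatorname*{pr}\nolimits_{k})\leq A(0)=0$. Since $z_{k}(x)\geq 0$, (SL), (TR) and positive homogeneity give the linear-looking expansion
\[
T_{n}(g_{z(x)})\leq T_{n}\Bigl(\sum\operatorname*{pr}\nolimits_{k}^{2}\Bigr)+2\sum z_{k}(x)T_{n}(-\operatorname*{pr}\nolimits_{k})+\Bigl(\sum z_{k}(x)^{2}\Bigr)T_{n}(\mathbf{1}),
\]
and its analogue for $A$. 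Substituting these into $A(\mathbf{1})T_{n}(g_{z(x)})+T_{n}(\mathbf{1})A(g_{z(x)})$ with the explicit $z(x)$, the quadratic-in-$z_{k}$ contributions exactly cancel the linear-in-$z_{k}$ ones, leaving precisely the function whose sup norm is $\mu_{n}^{2}$. Setting $\delta:=\mu_{n}$ makes $(\omega(f,\delta)/\delta^{2})\mu_{n}^{2}=\omega(f,\mu_{n})$, and collecting the terms yields (\ref{eq0}); the degenerate case $\mu_{n}=0$ is handled by continuity.

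The hardest point, I expect, is reconciling the fact that the optimal pivot $z(x)$ need not lie in $K$ even though $f(z(x))$ formally appears in the derivation. The key observation is that $A(\mathbf{1})T_{n}(f)-T_{n}(\mathbf{1})A(f)$ is manifestly $z$-independent, so all terms carrying $f(z(x))$ cancel, and what ultimately survives are only the operator values $T_{n}(g_{z(x)})$ and $A(g_{z(x)})$, which are well-defined for any $z(x)\in\mathbb{R}_{+}^{N}$. A secondary obstacle is the careful use of the shift trick when applying (TR) to constants of arbitrary sign, and the bookkeeping of constants required to obtain the precise coefficient $\Vert T_{n}(\mathbf{1})A(\mathbf{1})\Vert+1$ in front of $\omega(f,\mu_{n})$.
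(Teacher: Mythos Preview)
Your approach has two genuine difficulties, one of which you flag but do not actually resolve.

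\emph{The constant.} Writing $A(1)T_n(f)-T_n(1)A(f)=A(1)[T_n(f)-f(z)T_n(1)]-T_n(1)[A(f)-f(z)A(1)]$ and bounding each bracket by your first-paragraph estimate gives
\[
|A(1)T_n(f)-T_n(1)A(f)|\leq 2\,\omega(f,\delta)\,A(1)T_n(1)+\frac{\omega(f,\delta)}{\delta^{2}}\bigl[A(1)T_n(g_z)+T_n(1)A(g_z)\bigr],
\]
with an unavoidable factor $2$ in front of $A(1)T_n(1)$. Even granting the rest of the argument, your route yields $(2\Vert T_n(1)A(1)\Vert+1)\,\omega(f,\mu_n)$ rather than the stated $(\Vert T_n(1)A(1)\Vert+1)\,\omega(f,\mu_n)$; the ``bookkeeping'' you allude to cannot repair this, because the loss is structural to the parallel use of $T_n$ and $A$ plus a triangle inequality.

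\emph{The pivot.} Your pointwise choice $z_{k}(x)=-A(-\operatorname*{pr}\nolimits_{k})(x)/A(1)(x)$ need not lie in $K$ (only in the box $\prod_{k}[0,\max_{K}\operatorname*{pr}\nolimits_{k}]$). You argue this is harmless because $f(z(x))$ cancels and only $T_n(g_{z(x)})$, $A(g_{z(x)})$ survive. But the cancellation is beside the point: the inequality $|T_n(f)-f(z)T_n(1)|\leq\omega\,T_n(1)+(\omega/\delta^{2})T_n(g_z)$ comes from Lemma~\ref{lem2}, which requires both arguments of $f$ to lie in $K$. For $z\notin K$ you have no such inequality to feed into the triangle step, so there is nothing to combine and nothing to cancel.

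The paper avoids both problems by applying the two operators \emph{in series} rather than in parallel with a shared pivot. From $|f-f(y)\cdot 1|\leq[1+\delta^{-2}g_{y}]\,\omega(f,\delta)$ (an inequality in $C(K)$ for each parameter $y\in K$) it first applies $T_n$, producing a bound for $|T_n(f)(x)-f(y)T_n(1)(x)|$ after expanding $T_n(g_y)$ by sublinearity. For fixed $x\in X$ this is now an inequality between functions of $y\in K$; one then applies $A$ to both sides in the $y$-variable (Lemma~\ref{thmKrein} on the left, positive homogeneity together with the sign $-T_n(-\operatorname*{pr}\nolimits_{k})(x)\geq0$ on the right) and evaluates the resulting $C(X)$-inequality at the same point $x$. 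This directly yields $|T_n(f)(x)A(1)(x)-T_n(1)(x)A(f)(x)|$ with a \emph{single} $T_n(1)A(1)$ term and with second-moment part equal pointwise to the expression whose sup norm is $\mu_n^{2}$; no optimization over a pivot is ever needed.
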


\begin{proof}
Since $A(1)(X)$ is a compact subset of $\mathbb{R}$ and $A(1)(x)>0$ for all
$x\in X$, it follows that $\inf_{x\in X}A(1)(x)>0.$ Then for all $f\in C(X),$
\begin{align}
\frac{1}{M}\Vert T_{n}(f)-A(f)\Vert &  =\inf_{x\in X}A(1)(x)\cdot\sup_{x\in
X}\left\vert T_{n}(f)(x)-A(f)(x)\right\vert \label{basic_estimate}\\
&  \leq\sup_{x\in X}\left\vert \left(  T_{n}(f)(x)-A(f)(x)\right)
A(1)(x)\right\vert \nonumber\\
&  =\Vert(T_{n}(f)-A(f))A(1)\Vert,\nonumber
\end{align}
which reduces the proof of the inequality to that of the inequality%
\begin{equation}
\Vert(T_{n}(f)-A(f))A(1)\Vert\leq\left\{  {\Vert T_{n}(1)-A(1)\Vert\cdot\Vert
A(f)\Vert+(\Vert T_{n}(1)A(1)\Vert+1)\omega(f,\mu_{n})}\right\}  .
\label{fin_eq}%
\end{equation}

For this one, according to Lemma \ref{lem2},
\[
|f(x)-f(y)|\leq(1+\Vert x-y\Vert^{2}\delta^{-2})\omega(f,\delta),
\]
for all $x,y\in K$ and $\delta>0$ a fact that yields, viewing $y$ as a
parameter, the inequality%
\[
\left\vert f-f(y)\right\vert \leq\left[  1+\delta^{-2}\left(  \sum_{k=1}%
^{N}\operatorname*{pr}\nolimits_{k}^{2}+2\sum_{k=1}^{N}\operatorname*{pr}%
\nolimits_{k}(y)\operatorname*{pr}\nolimits_{k}\right.  \left.  +\sum
_{k=1}^{N}\operatorname*{pr}\nolimits_{k}^{2}(y)\cdot1\right)  \right]
\omega(f,\delta).
\]
Suppose for a moment that $f\geq0$. Then, by taking into account Lemma
\ref{thmKrein},
\begin{multline*}
|T_{n}(f)-f(y)T_{n}(1)|\leq T_{n}(|f-f(y)\cdot1|)\\
\leq\left[  T_{n}(1)+\delta^{-2}\left(  T_{n}(\sum_{k=1}^{N}\left(
\operatorname*{pr}\nolimits_{k}\right)  ^{2})+2\sum_{k=1}^{N}\left(
\operatorname*{pr}\nolimits_{k}(y)\right)  T_{n}\left(  -\operatorname*{pr}%
\nolimits_{k}\right)  \right.  \right. \\
\left.  \left.  +\sum_{k=1}^{N}\left(  \operatorname*{pr}\nolimits_{k}%
(y)\right)  ^{2}T_{n}(1)\right)  \right]  \omega(f,\delta),
\end{multline*}
which leads, for each $x\in X,$ to the following inequality in $C(X):$
\begin{multline*}
|\left(  T_{n}(f)\right)  (x)-T_{n}(1)(x)\cdot f|\\
\leq\left[  T_{n}(1)(x)+\delta^{-2}\left(  T_{n}(\sum_{k=1}^{N}%
\operatorname*{pr}\nolimits_{k}^{2})(x)+2\sum_{k=1}^{N}T_{n}\left(
-\operatorname*{pr}\nolimits_{k}\right)  (x)\right.  \right.
\operatorname*{pr}\nolimits_{k}\\
\left.  \left.  +\sum_{k=1}^{N}T_{n}(1)(x)\operatorname*{pr}\nolimits_{k}%
^{2}\right)  \right]  \omega(f,\delta).
\end{multline*}
Applying the weakly nonlinear and monotone operator $A$ to the both sides of
this last inequality and noticing that $-2T_{n}(-\operatorname*{pr}%
\nolimits_{k})\geq0,$ we obtain
\begin{multline*}
|\left(  T_{n}(f)\right)  (x)A(1)-\left(  T_{n}(1)\right)  (x)A(f)|\leq
A\left(  |\left(  T_{n}(f)\right)  (x)-\left(  T_{n}(1)\right)  (x)\cdot
f|\right) \\
\leq\left[  T_{n}(1)(x)A(1)+\delta^{-2}\left(  T_{n}(\sum_{k=1}^{N}%
\operatorname*{pr}\nolimits_{k}^{2})(x)A(1)-2\sum_{k=1}^{N}A\left(
-\operatorname*{pr}\nolimits_{k}\right)  T_{n}\left(  -\operatorname*{pr}%
\nolimits_{k}\right)  (x)\right.  \right. \\
\left.  \left.  +A(\sum_{k=1}^{N}\operatorname*{pr}\nolimits_{k}^{2}%
)T_{n}(1)(x)\right)  \right]  \omega(f,\delta),
\end{multline*}
whence
\begin{multline*}
|\left(  T_{n}(f)\right)  (x)A(1)(x)-\left(  T_{n}(1)\right)  (x)A(f)(x)|\\
\leq\left[  T_{n}(1)(x)A(1)(x)+\delta^{-2}\left(  T_{n}(\sum_{k=1}%
^{N}\operatorname*{pr}\nolimits_{k}^{2})(x)A(1)(x)\right.  \right. \\
-2\sum_{k=1}^{N}A\left(  -\operatorname*{pr}\nolimits_{k}\right)
(x)T_{n}(-\operatorname*{pr}\nolimits_{k})(x)\left.  \left.  +A(\sum_{k=1}%
^{N}\operatorname*{pr}\nolimits_{k}^{2})(x)T_{n}(1)(x)\right)  \right]
\omega(f,\delta),
\end{multline*}
for all $x\in X.$ Now, taking the supremum over $x\in K$, we arrive at
\begin{equation}
\Vert\left(  T_{n}(f)\right)  A(1)-\left(  T_{n}(1)\right)  A(f)\Vert
\label{eq3.1}%
\end{equation}%
\begin{multline*}
\leq\left[  \Vert T_{n}(1)A(1)\Vert+\delta^{-2}\left(  \left\Vert T_{n}%
(\sum_{k=1}^{N}\operatorname*{pr}\nolimits_{k}^{2})A(1)-2\sum_{k=1}%
^{N}A\left(  -\operatorname*{pr}\nolimits_{k}\right)  T_{n}\left(
-\operatorname*{pr}\nolimits_{k}\right)  \right.  \right.  \right. \\
\left.  \left.  +\left.  A(\sum_{k=1}^{N}\operatorname*{pr}\nolimits_{k}%
^{2})T_{n}(1)\right\Vert \right)  \right]  \omega(f,\delta).
\end{multline*}

The case where $f\in C(X)$ is a signed function can be reduced to the
precedent one by replacing $f$ by $f+\left\Vert f\right\Vert _{\infty}.$
Indeed, using the property of weak nonlinearity of the operators $L_{n}$ and
$A$ we have%
\begin{align*}
&  T_{n}(f+\left\Vert f\right\Vert _{\infty})A(1)-A(f+\left\Vert f\right\Vert
_{\infty})T_{n}(1)\\
&  =\left(  T_{n}(f)+\left\Vert f\right\Vert _{\infty}T_{n}(1)\right)
A(1)-\left(  A(f)+\left\Vert f\right\Vert _{\infty}A(1)\right)  T_{n}(1)\\
&  =T_{n}(f)A(1)-A(f)T_{n}(1),
\end{align*}
which makes the inequality (\ref{eq3.1}) valid for all functions $f\in C(X).$

Assuming $\mu_{n}\not =0,$ we infer from the inequality (\ref{eq3.1}) applied
for
\[
\delta=\mu_{n}=\left\Vert T_{n}(\sum_{k=1}^{N}\operatorname*{pr}%
\nolimits_{k}^{2})A(1)-2\sum_{k=1}^{N}A\left(  -\operatorname*{pr}%
\nolimits_{k}\right)  T_{n}\left(  -\operatorname*{pr}\nolimits_{k}\right)
+A(\sum_{k=1}^{N}\operatorname*{pr}\nolimits_{k}^{2})T_{n}(1)\right\Vert
^{1/2},
\]
the relation
\[
\Vert\left(  T_{n}(f)\right)  A(1)-\left(  T_{n}(1)\right)  A(f)\Vert
\leq(\Vert T_{n}(1)A(1)\Vert+1)\omega(f,\mu_{n}).
\]
Then
\begin{multline*}
\Vert T_{n}(f)A(1)-A(1)A(f)\Vert\leq\Vert T_{n}(f)A(1)-T_{n}(1)A(f)\Vert+\Vert
T_{n}(1)A(f)-A(1)A(f)\Vert\\
\leq(\Vert T_{n}(1)A(1)\Vert+1)\omega(f,\mu_{n})+\Vert A(f)\Vert\cdot\Vert
T_{n}(1)-A(1)\Vert
\end{multline*}
and the proof of the inequality (\ref{fin_eq}) is done.

It remains to consider the case $\mu_{n}=0$. In this case the inequality
(\ref{eq3.1}) reduces to
\[
\Vert T_{n}(f)A(1)-T_{n}(1)A(f)\Vert=0.
\]
Then
\begin{align*}
\Vert T_{n}(f)A(1)-A(1)A(f)\Vert &  \leq\Vert T_{n}(f)A(1)-T_{n}%
(1)A(f)\Vert+\Vert T_{n}(1)A(f)-A(1)A(f)\Vert\\
&  \leq\Vert A(f)\Vert\cdot\Vert T_{n}(1)-A(1)\Vert,
\end{align*}
which coincides with the assertion of inequality (\ref{eq0}) for $\mu_{n}=0.$

The proof is done.
\end{proof}

\begin{remark}
\label{rem1}Theorem \emph{\ref{thm2} }implies\emph{ }Theorem \emph{\ref{thm1}%
.} To prove this, we have to notice that
\begin{multline*}
T_{n}(\sum_{k=1}^{N}\operatorname*{pr}\nolimits_{k}^{2})A(1)-2\sum_{k=1}%
^{N}A\left(  -\operatorname*{pr}\nolimits_{k}\right)  T_{n}\left(
-\operatorname*{pr}\nolimits_{k}\right)  +A(\sum_{k=1}^{N}\operatorname*{pr}%
\nolimits_{k}^{2})T_{n}(1)\\
=[T_{n}(\sum_{k=1}^{N}\operatorname*{pr}\nolimits_{k}^{2})-A(\sum_{k=1}%
^{N}\operatorname*{pr}\nolimits_{k}^{2})]A(1)\\
-2\sum_{k=1}^{N}A(-\operatorname*{pr}\nolimits_{k})[T_{n}(-\operatorname*{pr}%
\nolimits_{k})-A(-\operatorname*{pr}\nolimits_{k})]\\
+[T_{n}(1)-A(1)]A(\sum_{k=1}^{N}\operatorname*{pr}\nolimits_{k}^{2})+2\Delta,
\end{multline*}
where $\Delta=A(1)A(\sum_{k=1}^{N}\operatorname*{pr}\nolimits_{k}^{2}%
)-(\sum_{k=1}^{N}A(-\operatorname*{pr}\nolimits_{k}))^{2}$. Therefore, if
$\Delta=0$ $($which is one of the assumptions of Theorem $1)$, it follows that
$\mu_{n}\rightarrow0$ whenever
\begin{align*}
\lim_{n\rightarrow\infty}T_{n}(-pr_{k})  &  =A(-pr_{k})\text{\quad for
}k=1,...,N\\
\lim_{n\rightarrow\infty}T_{n}(\sum_{k=1}^{N}pr_{k})  &  =A(\sum_{k=1}%
^{N}(pr_{k})^{2})
\end{align*}
and $\lim_{n\rightarrow\infty}T_{n}(1)=A(1)$, uniformly on $X$. According to
Theorem \ref{thm2},
\[
\lim_{n\rightarrow\infty}T_{n}(f)=A(f)
\]
uniformly on $X$ and the proof is done.
\end{remark}

In the case of functions $f\in C[0,1]$, Theorem \ref{thm2} can be stated as follows.

\begin{corollary}
\label{cor1} Denote $e_{k}(x)=x^{k}$ for $k\in\{0,1,2\}$ and let
$A:C([0,1])\rightarrow C([0,1])$ be a weakly nonlinear and monotone operator
such that $A(1)(x)>0$ for all $x\in\lbrack0,1]$. If $T_{n}:C([0,1])\rightarrow
C([0,1])$ is a sequence of weakly nonlinear and monotone operators, then for
every $f\in C([0,1])$ and $n=1,2,...,$ we have
\[
\Vert T_{n}(f)-A(f)\Vert\leq M\{{\Vert T_{n}(1)-A(1)\Vert\cdot\Vert
A(f)\Vert+(\Vert T_{n}(1)A(1)\Vert+1)\omega(f,\mu_{n})}\},
\]
where $M=[(\inf_{x\in\lbrack0,1]}A(1)(x))^{-1}]$ and
\[
\mu_{n}=\Vert T_{n}(e_{2})A(1)-2T_{n}(-e_{1})A(-e_{1})+T_{n}(1)A(e_{2}%
)\Vert^{1/2}.
\]
If, in addition, $T_{n}(1)=A(1)$, then the conclusion becomes
\[
\Vert T_{n}(f)-A(f)\Vert\leq M(\Vert A(1)^{2}\Vert+1)\omega(f,\mu_{n}).
\]

\end{corollary}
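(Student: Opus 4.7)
The plan is to obtain Corollary \ref{cor1} as a direct specialization of Theorem \ref{thm2} to the one-dimensional setting $N=1$ and $K=X=[0,1]\subset\mathbb{R}_{+}$. First I would observe that in this setting the single coordinate projection $\operatorname{pr}_{1}$ on $[0,1]$ is exactly the function $e_{1}(x)=x$, so that $\operatorname{pr}_{1}^{2}=e_{2}$. The hypotheses of Theorem \ref{thm2} are immediate: $A$ and each $T_{n}$ are weakly nonlinear and monotone operators from $C([0,1])$ into itself, and the assumption $A(1)(x)>0$ for all $x\in[0,1]$ gives strict positivity of $A(1)$ as an element of $C([0,1])$.

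Next I would match the quantity $\mu_{n}$. With $N=1$ and the identifications above, the expression in Theorem \ref{thm2} collapses to
\[
\mu_{n}=\bigl\|T_{n}(e_{2})A(1)-2A(-e_{1})T_{n}(-e_{1})+A(e_{2})T_{n}(1)\bigr\|^{1/2},
\]
which is the definition given in Corollary \ref{cor1} (using commutativity of pointwise multiplication in $C([0,1])$). The constant $M=1/\inf_{x\in[0,1]}A(1)(x)$ coincides with the one in the corollary since the infimum is attained and strictly positive by compactness of $[0,1]$ and continuity of $A(1)$.

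Applying Theorem \ref{thm2} then yields verbatim the main inequality
\[
\|T_{n}(f)-A(f)\|\leq M\bigl\{\|T_{n}(1)-A(1)\|\cdot\|A(f)\|+(\|T_{n}(1)A(1)\|+1)\omega(f,\mu_{n})\bigr\}.
\]
For the second statement I would specialize to $T_{n}(1)=A(1)$: the first summand vanishes because $\|T_{n}(1)-A(1)\|=0$, while $T_{n}(1)A(1)=A(1)^{2}$, so the bound reduces to
\[
\|T_{n}(f)-A(f)\|\leq M\bigl(\|A(1)^{2}\|+1\bigr)\omega(f,\mu_{n}),
\]
as claimed.

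There is no substantive obstacle here, since the corollary is a transcription of Theorem \ref{thm2} after setting $N=1$; the only point requiring care is keeping track of the identification $\operatorname{pr}_{1}=e_{1}$ and correctly simplifying the sums indexed by $k=1,\dots,N$ to their single term when $N=1$.
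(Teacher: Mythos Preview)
Your proposal is correct and follows exactly the approach intended in the paper: Corollary \ref{cor1} is stated without a separate proof precisely because it is the specialization of Theorem \ref{thm2} to $N=1$ and $K=X=[0,1]$, with $\operatorname{pr}_{1}=e_{1}$ and $\operatorname{pr}_{1}^{2}=e_{2}$. Your identification of $\mu_{n}$, $M$, and the reduction under $T_{n}(1)=A(1)$ are all accurate, so there is nothing to add.
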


Corollary \ref{cor1} extends Theorem 1 from Shisha and Mond \cite{SM} (which
represents the particular case where all operators $T_{n}$ are linear,
continuous and positive and $A$ is the identity of $C([0,1])).$

\section{Applications}

In this section we apply Corollary \ref{cor1} to some concrete examples.

We will need the following family of polynomials
\[
p_{n,k}(x)={\binom{n}{k}}x^{k}(1-x)^{n-k},\text{\quad}0\leq k\leq n,
\]
related to Bernstein's proof of the Weierstrass approximation theorem. As is
well known they verify a number of combinatorial identities such as%
\begin{align}
\sum_{k=0}^{n}\binom{n}{k}x^{k}(1-x)^{n-k}  &  =1\label{comb1}\\
\sum_{k=0}^{n}k\binom{n}{k}x^{k}(1-x)^{n-k}  &  =nx\label{comb2}\\
\sum_{k=0}^{n}k^{2}\binom{n}{k}x^{k}(1-x)^{n-k}  &  =nx(1-x+nx) \label{comb3}%
\end{align}
for all $x\in\mathbb{R}.$ See, e.g., \cite{CN2014}, Theorem 8.8.1, p. 256.

\begin{example}
\label{ex1} Let $\varphi:[0,1]\rightarrow\lbrack0,1]$ be a continuous
function. Attached to it is the sequence of Bernstein-type operators
$B_{n}:C([0,1])\rightarrow C\left(  [0,1]\right)  ,$ defined by the formulas%
\[
B_{n}(f)(x)=\sum_{k=0}^{n}p_{n,k}(\varphi(x))f(k/n)
\]
and also the operator
\[
A:C\left(  [0,1]\right)  \rightarrow C\left(  [0,1]\right)  ,\text{\quad
}A(f)=f\circ\varphi.
\]
Clearly, these operators are linear, positive and unital. Also, the operator
$A$ verifies the condition
\[
A(1)A(x^{2})=\sum_{k=1}^{N}(A(x))^{2}.
\]
The operators $T_{n}:C([0,1])\rightarrow C\left(  [0,1]\right)  $ given by
\[
T_{n}(f)=\max\left\{  B_{n}(f),B_{n+1}(f)\right\}
\]
are sublinear, monotone and strongly translatable. Simple computations
\emph{(}based on the identities \emph{(\ref{comb1})-(\ref{comb3})), }show
that
\begin{align*}
B_{n}(1)  &  =1,\\
B_{n}(-x)  &  =-\varphi(x)
\end{align*}
and%
\[
B_{n}(x^{2})=(\varphi(x))^{2}+\frac{\varphi(x)(1-\varphi(x))}{n}.
\]
We get $T_{n}(1)=1$, $T_{n}(-x)=-\varphi(x)$ and $T_{n}(x^{2})=\max
\{B_{n}(x^{2}),B_{n+1}(x^{2})\}=(\varphi(x))^{2}+\frac{\varphi(x)(1-\varphi
(x))}{n}$, which by Corollary \emph{\ref{cor1}} implies
\[
\mu_{n}^{2}=\left\Vert \frac{\varphi(1-\varphi)}{n}\right\Vert \leq\frac
{1}{4n}%
\]
and therefore
\[
\Vert T_{n}(f)-A(f)\Vert\leq2\omega\left(  f,\frac{1}{2\sqrt{n}}\right)  .
\]

\end{example}

\begin{example}
\label{ex2} Given a continuous function $\varphi:[0,1]\rightarrow\lbrack0,1]$,
one associates to it the sequence of nonlinear operators $T_{n}:C\left(
[0,1]\right)  \rightarrow C\left(  [0,1]\right)  $ defined by the formulas
\[
T_{n}(f)(x)=\sum_{k=0}^{n}p_{n,k}(\varphi(x))\sup_{[k/(n+1)\leq t\leq
,(k+1)/(n+1)]}f(t).
\]
Also, define the linear, positive and unital operator
\[
A:C\left(  [0,1]\right)  \rightarrow C\left(  [0,1]\right)  ,\text{\quad
}A(f)=f\circ\varphi.
\]
Since
\begin{align*}
B_{n}(1)  &  =1,\\
B_{n}(-x)  &  =-\varphi(x)
\end{align*}
and%
\[
B_{n}(x^{2})=\varphi(x)^{2}+\frac{\varphi(x)(1-\varphi(x))}{n},
\]
we get $T_{n}(1)=1$,
\[
T_{n}(-x)=\sum_{k=0}^{n}p_{n,k}(\varphi(x))\cdot(-k/(n+1))=-\frac{n}{n+1}%
\sum_{k=0}^{n}p_{n,k}(\varphi(x))\frac{k}{n}=-\frac{n}{n+1}\varphi(x),
\]%
\[
T_{n}(x^{2})=\sum_{k=0}^{n}p_{n,k}(\varphi(x))\cdot\frac{(k+1)^{2}}{(n+1)^{2}%
}=\sum_{k=0}^{n}p_{n,k}(\varphi(x))\cdot\frac{k^{2}+2k+1}{(n+1)^{2}}%
\]%
\[
=\left(  \frac{n}{n+1}\right)  ^{2}\cdot\sum_{k=0}^{n}p_{n,k}(\varphi
(x))\frac{k^{2}}{n^{2}}+2\frac{n}{(n+1)^{2}}\sum_{k=0}^{n}p_{n,k}%
(\varphi(x))\frac{k}{n}+\frac{1}{(n+1)^{2}}%
\]%
\[
=\left(  \frac{n}{n+1}\right)  ^{2}\left(  (\varphi(x))^{2}+\frac
{\varphi(x)(1-\varphi(x))}{n}\right)  +\frac{2n}{(n+1)^{2}}\varphi(x)+\frac
{1}{(n+1)^{2}},
\]
which by Corollary \emph{\ref{cor1}} implies
\[
\mu_{n}^{2}=\left\Vert \left(  \frac{n}{n+1}\right)  ^{2}\left(  \varphi
^{2}+\frac{\varphi(1-\varphi)}{n}\right)  +\frac{2n\varphi}{(n+1)^{2}}%
+\frac{1}{(n+1)^{2}}-\frac{2n\varphi^{2}}{n+1}+\varphi^{2}\right\Vert
\]%
\[
=\left\Vert \frac{3n\varphi+1-\varphi^{2}(n-1)}{(n+1)^{2}}\right\Vert
\leq\left\Vert \frac{3n+1+(n-1)}{(n+1)^{2}}\right\Vert \leq\frac{4}{n}.
\]
and therefore
\[
\Vert T_{n}(f)-A(f)\Vert\leq2\omega\left(  f,\frac{2}{\sqrt{n}}\right)  .
\]

\end{example}

\textbf{Funding} The authors declare that no funds, grants, or other support
were received during the preparation of this manuscript.

\textbf{Competing Interests} The authors have no relevant financial or
non-financial interests to disclose.

\end{document}